\documentclass[review]{elsarticle}
\usepackage{lineno,hyperref}
\usepackage{amssymb,enumerate}
\usepackage{amsmath}
\usepackage{graphics}
\usepackage{color}
\usepackage{tikz}
\usetikzlibrary{positioning}
\usepackage{xcolor}

\usepackage{tikz}
\usetikzlibrary{calc,intersections,patterns}
\newtheorem{thm}{Theorem}[section]

\newtheorem{lem}[thm]{Lemma}

\newtheorem{defn}[thm]{Definition}
\newtheorem{rem}[thm]{Remark}

\newtheorem{proof}{Proof}[section]
\newcommand{\e}{\varepsilon}
\newcommand{\R}{\mathbf{R}}
\modulolinenumbers[5]
\journal{Journal of \LaTeX\ Templates}
\begin{document}

\begin{frontmatter}

\title{Blow-up of solutions to the Euler-Poisson-Darbox equation with
critical power nonlinearity}



\author[secondaryaddress]{Mengting Fan}
\ead{fanmengting@zjnu.edu.cn}

\author[secondaryaddress]{Ning-An Lai}
\ead{ninganlai@zjnu.edu.cn}

\author[thirdaddress]{Hiroyuki Takamura}
\ead{hiroyuki.takamura.a1@tohoku.ac.jp}

\address[secondaryaddress]{School of Mathematical Sciences, Zhejiang Normal University, Jinhua 321004, China}

\address[thirdaddress]{Mathematical Institute, Tohoku University, Aoba, Sendai 980-8578, Japan}

\begin{abstract}

In our recent precious work, we established the finite time blow up result and upper bound of lifespan estimate to the singular Cauchy problem of semilinear Euler-Poisson-Darboux equation in $\R^n$ with subcritical power type nonlinearity.
By introducing an improved test function, we obtain an enhanced lower bound for the functional including the spacetime integral of the nonlinear term (see \eqref{C14} below) with an additional logarithmic growth, which finally yields the blow up result and upper bound of lifespan estimate for the corresponding Cauchy problem with "critical" nonlinear power. And this gives some partial answer to the $\mathbf{open~ problem ~1}$ posed by D'Abbicco (J. Differential Equations 286 (2021), 531-556).

\end{abstract}

\begin{keyword}
Euler-Poisson-Darboux equation, singular problem, blow up, critical, lifespan
\MSC 35L71\sep 35Q05
\end{keyword}

\end{frontmatter}


\section{Introduction}

\par\quad
In our recent work \cite{FLT}, the singular Cauchy problem of the semilinear Euler-Poisson-Darboux
equation
in $\R^n(n\ge 1)$
\begin{equation}
\label{SHEPD}
\left\{
\begin{aligned}
& u_{tt}-\Delta u+\frac{\mu}{t}u_t=t^\alpha|u|^p, ~~~~t>0, x\in \R^n,\\
& u(0, x)=u_0(x), \quad u_t(0, x)=0,~~~~x\in \R^n
\end{aligned}
\right.
\end{equation}
 was studied. Here $\mu>0$ denotes a parameter describing the effect of the damping, $\alpha\ge 0, p>1$ stand for nonlinear exponents. By using the modified Bessel function to construct a test function and combining the iteration argument, the finite time blow up result was established for the subcritical nonlinear power
\[
1<p<p_S(n+\mu, \alpha), n\ge 1, \mu>0,\alpha\geq0,
\]
where $p_S(n+\mu, \alpha)$ denotes the positive root of the quadratic equation
\begin{equation}\label{gmnmupalpha}
\begin{aligned}
\gamma(n, \mu, \alpha, p):=2+(n+\mu+1+2\alpha)p-(n+\mu-1)p^2=0.
\end{aligned}
\end{equation}
In this paper, we are going to study the \lq\lq critical" case, i.e. $p=p_S(n+\mu, \alpha)$. Here the quotation marks mean that criticality is tentative, since the global existence for the opposite side $p>p_S(n+\mu, \alpha)$ is left open for $n\ge 2$. And the global existence of weak solution in $\R$ (1-D) has been studied in \cite{Dab}, in which the author posed some \textbf{open problems in section 5} with a sentence like \lq\lq \textit {Also, a complete knowledge of blow-up results for the semilinear E.P.D. equation considered
in this paper is lacking so far}".

The study of Euler-Poisson-Darboux equation can be traced back to early time, and originated from some classical works such as Euler \cite{Euler70}, Poisson \cite{Pos23}, Riemann \cite{Rie76} and Darboux \cite{Dar15}. The E.P.D. equation (short for the Euler-Poisson-Darboux equation) has important application in physics, such as the
theory of surfaces, the sound propagation and the colliding gravitational fields, we refer the reader to \cite{Wei54, Wei55, Bre73, UKa13, WCh16} for more details.

Recently, the nonlinear E.P.D. equation and related models have attracted more and more attentions, see \cite{Kel57, Lev74, Lev76, Ues94, Zha18, DLu13, Dab} and detailed introduction in \cite{FLT}. These works mainly concern the global existence and finite time blow up under different assumptions. We should mention that the regular analog of problem \eqref{SHEPD} in $\R^n(n\ge 1)$
\begin{equation}
\label{REPD}
\left\{
\begin{aligned}
& u_{tt}-\Delta u+\frac{\mu}{t}u_t=t^\alpha|u|^p, ~~~~t\ge t_0>0, x\in \R^n,\\
& u(t_0, x)=u_0(x), \quad u_t(t_0, x)=u_1(x),~~~~x\in \R^n
\end{aligned}
\right.
\end{equation}
or
\begin{equation}
\label{Sdamped}
\left\{
\begin{aligned}
& u_{tt}-\Delta u+\frac{\mu}{1+t}u_t=t^\alpha|u|^p, ~~~~t\ge 0, x\in \R^n,\\
& u(0, x)=u_0(x), \quad u_t(0, x)=u_1(x),~~~~x\in \R^n
\end{aligned}
\right.
\end{equation}
has also been widely studied. Actually, for $\alpha=0$, it is conjectured that the small data Cauchy problem \eqref{Sdamped} admits a critical power
\begin{equation}\label{cp}
\begin{aligned}
p_{crit}=\max\Big\{p_S(n+\mu), 1+\frac 2n\Big\},~~~\mu>0,
\end{aligned}
\end{equation}
where $p_S(n)$ denotes the positive root of
\[
\gamma(n, p):=2+(n+1)p-(n-1)p^2=0,~~n\ge 2,
\]
 and is usually called the Strauss critical exponent, dividing the nonlinear power $p\in (1, \infty)$ for the small data Cauchy problem of semilinear wave equation
\[
u_{tt}-\Delta u=|u|^p
\]
into two parts: finite time blow up range $(1, p_S(n)]$ and global existence range $(p_S(n), \infty)$. The existence of such critical exponent was conjectured by Strauss \cite{Str81} in 1981 and has been completely solved.
The definition \eqref{cp} means $p_{crit}=p_S(n+\mu)$ if $0<\mu<\mu^*(n)$, and $p_{crit}=1+\frac 2n$ if $\mu\ge \mu^*(n)$, where
\begin{equation}\label{mun}
\mu^*(n)=\frac{n^2+n+2}{n+2}
\end{equation}
 is determined by setting
\[
p_S(n+\mu^*)=1+\frac 2n.
\]
This conjecture has been partially verified, see \cite{Dab15, Dab, DaL15, DLR15, HeL25, HLY24, HLY25, HSZ25, Ike, IKTW20, Wak16, KaS19, Lai20, LST20, LTW17, LZ21, LiY25, LiY26, Pal19, TuL19} and references therein.

As mentioned above, the aim of this paper is to establish blow up result and upper bound of lifespan estimate for the Cauchy problem \eqref{SHEPD} with nonlinear power $p=p_S(n+\mu, \alpha)$. It is expected that $p_S(n+\mu, \alpha)$ is really the critical power for $0<\mu<\mu^*(n)$, although this fact is still to be verified. Usually, the critical problem is much more delicate to get blow up result than the subcritical case, since it is the boardline between blow up and global existence, and the lifespan will be always the exponential type, meaning more close to the global existence. For this end, one usually has to gain an additional logarithmic growth for the integral of the nonlinear term, see the classical works of blow up by Yordanov and Zhang \cite{YZ06}, Zhou \cite{Zhou07}, and lifespan estimate by Takamura and Wakasa \cite{TW11} for semilinear wave equation. This observation is further developed in \cite{TuL19,PT22,LZ23,LPT24} for other wave models with critical nonlinear power.

In order to obtain the additional logarithmic growth for the critical problem, we have two different key ingredients comparing to the subcritical case. The first one is that we construct the test function by using rescaled functions and integrating with respect to the scale parameter (see \eqref{bq}), which leads to better asymptotic behavior (see \eqref{asymforbq} and \eqref{asymfortbq}). The other one is the construction of the blow up functional ($Y(M)$ in \eqref{C10}) which includes the spacetime integral of the nonlinear term and admits an enhanced lower bound (see \eqref{C13}).

Before stating the main results, we first denote the energy and weak solutions of problem \eqref{SHEPD}.
\begin{defn}
We say that $u$ is an energy solution of \eqref{SHEPD} with Cauchy data $u_0(x)\in H^1(\R^n)$ over $[0,T)$ if
\begin{equation}
\begin{aligned}
u\in C([0,T),H^1(\R^n))\cap C^1([0,T),L^2(\R^n))\cap L^p_{loc}((0, T)\times\R^n)
\end{aligned}
\end{equation}
satisfies
\begin{equation}
\begin{aligned}
&\int_{0}^{t}ds\int_{\R^n}\{-u_t(s, x)\phi_t(x,s)+\nabla u(s, x)\cdot \nabla \phi(s, x)\}dx\\
&+\int_{0}^{t}ds\int_{\R^n} \frac{\mu u_t(s, x)}{s}\phi(s, x)dx\\
=&\int_{0}^{t}ds\int_{\R^n}s^\alpha|u(s, x)|^p\phi(s, x)dx
\end{aligned}
\end{equation}
with any $\phi(t, x)\in C_0^\infty([0, T)\times\R^n)$.

Employing the integration by parts in the above equality and letting $t\rightarrow T$, we have that
\begin{equation}
\begin{aligned}
&\int_{[0, T)\times\R^n}u(s, x)\bigg\{\phi_{tt}(s, x)-\Delta \phi(s, x)-\Big(\frac{\mu\phi(s, x)}{s}\Big)_s\bigg\}dxds\\
&+\int_{0}^{t}ds\int_{\R^n} \frac{\mu u_t(s, x)}{s}\phi(s, x)dx\\
=&-\int_{\R^n}u_0(x)\phi_t(0, x)dx+\int_{0}^{t}ds\int_{\R^n}s^\alpha|u(s, x)|^p\phi(s, x)dx,
\end{aligned}
\end{equation}
which is exactly the definition of the weak solution to \eqref{SHEPD}.
\end{defn}

The main result is stated as
\begin{thm}
\label{hddamped}
Let
\begin{equation}
\label{al}
\left\{
\begin{aligned}
& \alpha\ge 0,~~~n\ge 3,\\
& \alpha>0,~~~~n=2
\end{aligned}
\right.
\end{equation}
and
\[
0<\mu\leq\mu^*(n,\alpha), p=p_S(n+\mu, \alpha),
\]
 where
\begin{equation}
\begin{aligned}
\mu^*(n,\alpha)=\frac{2n^2+(n+2+\alpha)(n\alpha+2+\alpha)}{(n+2+\alpha)(2+\alpha)}.
\end{aligned}
\end{equation}
Assume the initial data $u_0(x)\in H^1(\R^n)$ is positive and has compact support
\begin{equation}
\begin{aligned}
supp~u_0(x)\subset\{|x|\le 1\},
\end{aligned}
\end{equation}
then there exists a constant $\e_0=\e_0(u_0,n,p,\mu,\alpha)>0$ such that the lifespan of the energy solution to the Cauchy problem \eqref{SHEPD} satisfies
\begin{equation}
\label{lf2}
T(\e)\le \exp\left(C\e^{-p(p-1)}\right),
\end{equation}
where $C$ denotes a generic positive constant independent of $\e$ which may have different values from line to line.
\end{thm}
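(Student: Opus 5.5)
We follow the test-function and iteration strategy of \cite{FLT}, strengthened in the spirit of Zhou \cite{Zhou07} and Takamura--Wakasa \cite{TW11} so that the functional built from the nonlinear term gains a logarithmic factor.

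\emph{Step 1: test functions.} We first build a positive solution of the adjoint linear problem. Take $\lambda(t)$ to be the solution of $\lambda''+\frac{\mu}{t}\lambda'=\lambda$ that is regular at $t=0$; this is a modified Bessel function, $\lambda(t)=t^{\frac{1-\mu}{2}}I_{\frac{\mu-1}{2}}(t)$ up to normalization. Take $\varphi(x)=\int_{S^{n-1}}e^{x\cdot\omega}d\omega$, which satisfies $\Delta\varphi=\varphi$. Then $\lambda(bt)\varphi(bx)$ solves the homogeneous equation for every scale $b>0$. We integrate over the scale parameter,
\[
b_q(t,x)=\int_0^1 \lambda(bt)\varphi(bx)\,e^{-b}b^{q-1}\,db ,
\]
for suitable $q$ (this is the construction \eqref{bq} mentioned in the introduction). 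On the region $|x|\le t+1$ we need two-sided asymptotics of the form
\[
b_q\sim t^{-\frac{n+\mu-1}{2}}(t+1-|x|)^{\frac{n+\mu-1}{2}-q}.
\]
We take the first admissible exponent $q=\frac{n+\mu-1}{2}-\frac1p$. We also need the analogous bound for $\partial_t b_q$ together with the relation between $\partial_t b_q$ and $b_{q+1}$ (\eqref{asymforbq}, \eqref{asymfortbq}). The restriction $\mu\le\mu^*(n,\alpha)$ together with \eqref{al} enters here. It guarantees that the exponents lie in the range where the Bessel asymptotics give these sharp bounds rather than the cruder bounds where the $1/t$ singularity at $t=0$ dominates. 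It should also make $p_S(n+\mu,\alpha)$ the relevant exponent, as opposed to a Fujita-type one.

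\emph{Step 2: functional and logarithmic lower bound.} We test the weak formulation with $b_q\eta_M(t)$, where $\eta_M$ is a cutoff at scale $M$. This gives an integral identity. Since $u_0\ge0$ is compactly supported, the data term is $\ge C\e$. Define
\[
Y(M)=\int_1^M\Big(\int\!\!\int t^\alpha|u|^p b_q\,\eta_\sigma^{*}\,dx\,dt\Big)\sigma^{-1}d\sigma ,
\]
as in \eqref{C10}. Then $MY'(M)$ equals the spacetime integral of the nonlinear term against a cutoff supported in $t\sim M$.

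Next we estimate the error terms $\int u\,(\partial_t^2-\Delta-\partial_t\frac{\mu}{t})(b_q\eta_M)$. These involve only derivatives of $\eta_M$, so they are supported where $t\sim M$. We apply Hölder with weight $t^\alpha b_q$ and use finite propagation speed ($|x|\le t+1$). The exponent $q$ is chosen exactly so that the resulting weighted integral $\int\!\!\int t^{-\alpha p'/p}b_q^{-p'/p}\dots$ is bounded by $C\log M$ to the correct power. This is the criticality condition $\gamma(n,\mu,\alpha,p)=0$.

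The outcome is the ordinary differential inequality
\[
C\e+ Y(M)\lesssim (\log M)^{\frac{p-1}{p}}\big(MY'(M)\big)^{\frac1p}.
\]
Equivalently, with $s=\log M$, we get $Y'(s)\ge C(\e+Y)^p s^{1-p}$. The lower bound $Y\ge C\e\log M$ (\eqref{C13}, \eqref{C14}) comes from the positivity of the data term fed through this same inequality.

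\emph{Step 3: blow-up and lifespan.} We integrate the Riccati-type inequality $Y'(s)\ge C(\e+Y)^p s^{1-p}$. Finite-time blow-up of this inequality for $s\ge s_0$ with $s_0\sim C\e^{-p(p-1)}$ gives \eqref{lf2}. The mechanism is to combine the first-stage bound $Y\gtrsim\e s$ with the inequality and iterate, or to integrate directly.

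\emph{Main obstacle.} The hardest step is Step 1: proving precise two-sided bounds on $b_q$ and on $\partial_t b_q$, uniformly in the region $|x|\le t+1$, including near $t=0$ where the damping $\mu/t$ is singular. The sign of $\partial_t b_q$ relative to $\frac{\mu}{t}b_q$ must be controlled so that the term $\int\frac{\mu}{t}u_t b_q\eta_M$ can be absorbed. Our first attempt is to use the integral representation of $I_\nu$ and split the $b$-integral at $b\sim 1/t$ and $b\sim 1/(t+1-|x|)$.
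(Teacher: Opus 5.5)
Your overall architecture (scale-integrated exponential test function, the functional $Y(M)$, Hölder with a logarithmic loss, a Riccati inequality in $\ln M$) is the paper's. However, Step 1 takes the wrong branch of the Bessel equation, and this is exactly where the singular problem is decided.

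The regular solution $\lambda(t)=t^{\frac{1-\mu}{2}}I_{\frac{\mu-1}{2}}(t)$ grows like $t^{-\frac{\mu}{2}}e^{t}$. So $\lambda(bt)\varphi(bx)$ is of size $e^{b(t+|x|)}$ up to polynomial factors, and your $b_q$ is exponentially large on $|x|\le t+1$. No polynomial asymptotics can hold, and the Hölder estimates of the cutoff terms are useless.

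Worse, because this branch is regular, $t^{\mu}\lambda'(t)=t^{\frac{\mu+1}{2}}I_{\frac{\mu+1}{2}}(t)\to0$ as $t\to0$. Since $u_t(0,x)=0$, the boundary term at $t=0$ then vanishes and $u_0$ never enters the identity. Your claim that the data term is $\ge C\e$ fails. The singularity at $t=0$ is not something to avoid; it is what the paper exploits.

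The paper takes the decaying branch, in adjoint form $h(t)=t^{\mu}\cdot t^{\frac{1-\mu}{2}}K_{\frac{\mu-1}{2}}(t)=t^{\frac{\mu+1}{2}}K_{\frac{\mu-1}{2}}(t)$. This $h$ solves $h''-(\frac{\mu}{t}h)'-h=0$ and decays like $t^{\mu/2}e^{-t}$. It also satisfies $-h'(t)+\frac{\mu}{t}h(t)=t^{\frac{\mu+1}{2}}K_{\frac{\mu+1}{2}}(t)\to 2^{\frac{\mu-1}{2}}\Gamma(\frac{\mu+1}{2})>0$, which makes the data term \eqref{C00} nonnegative.

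The factor $t^\mu$ is also needed, because your ODE is the forward one, not the adjoint. For a forward solution, $(\partial_t^2-\Delta-\partial_t\frac{\mu}{t})b_q=-\frac{2\mu}{t}\partial_t b_q+\frac{\mu}{t^2}b_q\neq0$, so the error terms are not confined to $t\sim M$. This is also why the paper's exponent is $q=\frac{n-\mu-1}{2}-\frac1p$, which is yours shifted by $\mu$.

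Finally, your predicted profile is wrong. Because $q+\frac{\mu}{2}=\frac{n-1}{2}-\frac1p<\frac{n-1}{2}$, the region $\lambda\lesssim 1/t$ of the scale integral dominates, and $b_q\sim t^{-q}$ uniformly on $|x|\le t+1$, see \eqref{asymforbq}. The weight $(t+2-|x|)^{\frac1p-1}$ appears only in $\partial_t b_q$, see \eqref{asymfortbq}. That weight is what produces the factor $\ln t$ in $II_2$.

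The second gap is the source of the logarithmic lower bound. What is needed is $Y(M)\gtrsim\e^p\ln M$ (not $\e\ln M$). It cannot come from pushing the data term through the upper-bound inequality. From $C\e+Y\lesssim(\ln M)^{1/p'}(MY')^{1/p}$ you only get $MY'\gtrsim\e^p(\ln M)^{1-p}$. That gives $Y\gtrsim\e^p(\ln M)^{2-p}$ if $p<2$ and no growth if $p>2$. The case $p>2$ does occur here: for $n=3$, $\alpha=0$, $0<\mu<1$ one has $p_S(n+\mu,\alpha)>2$. Integrating $W'\ge CW^p s^{1-p}$ from $W\ge\e$ then gives at best $\ln T_\e\lesssim\e^{-\frac{p-1}{2-p}}$ for $p<2$, a worse exponent than $p(p-1)$, and no blow-up for $p>2$.

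The paper instead imports an independent lower bound, valid for every $t$, on the unweighted nonlinear integral, \eqref{S9}, from the subcritical analysis. It multiplies this by the uniform bound $b_q\gtrsim t^{-q}$ and uses the criticality identity $q=n+\alpha-\frac{n+\mu-1}{2}p$. This gives $tY'(t)\gtrsim\e^p$ for all $t$, see \eqref{C30}, hence \eqref{C14}. Only with this first-stage bound does $tY'\gtrsim(\ln t)^{1-p}Y^p$ yield $\ln T_\e\lesssim\e^{-p(p-1)}$.

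The lower bound on $b_q$ must hold up to the light cone, since \eqref{S9} gives no spatial localization of $|u|^p$. With the profile you claimed, which is smaller by a factor $t^{-1/p}$ near $|x|=t$, this step would break down.
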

\begin{rem}
The value of $\mu^*(n, \alpha)$ in the above theorem comes from the fact that
 $p_{S}(n,\alpha,\mu)\geq p_F(n,\alpha)$, where  $p_F(n,\alpha)=1+\frac{2+\alpha}{n}$
and $p_{S}(n,\mu,\alpha)$ is the positive root of the quadratic equation \eqref{gmnmupalpha}, denoting the shifted Fujita and Strauss exponent by $\alpha$ and $\mu$ respectively. It can be obtained by solving the inequality
 $\gamma(n,\mu,\alpha,1+\frac{2+\alpha}{n})\geq0$. It is easy to see that if  $\alpha=0$, then $\mu^*(n, \alpha)$ is nothing but $\mu^*(n)$ in \eqref{mun}.

\end{rem}

\begin{rem}

Comparing to the blow up result for the corresponding regular Cauchy problem \eqref{REPD} or \eqref{Sdamped}, we face a difficulty to handle the singular damping coefficient at $t=0$ after multiplying the test function and integrating by part, see the first term in \eqref{C17} below. To overcome this difficulty, we use the modified Bessel function $K_{\frac{\mu-1}{2}}(t)$ to construct an appropriate test function (see \eqref{Kmu}). The key point is that we can use the asymptotic behavior $\eqref{Bes}_1$ and recurrence relation \eqref{rela} of $K_{\frac{\mu-1}{2}}(t)$ to eliminate the singularity at $t=0$, see \eqref{C00} below.

\end{rem}

\section{Local existence and finite speed propagation}
In this section we will prove the local existence of solution and finite speed propagation property to the Cauchy problem \eqref{SHEPD}, by using the method similar to that in Section 5 in \cite{LST}. Assuming that
\begin{equation}
\label{p}
\left\{
\begin{aligned}
& 1<p\leq\frac{n}{n-2},~~~n\ge 3,\\
& 1<p<\infty,~~~~n=2.
\end{aligned}
\right.
\end{equation}

Define the function space
\begin{equation}\label{btk}
\begin{aligned}
B_{T}:=&\Big\{u\in C\big([0,T),H^1(\R^n)\big)\cap C^1\big([0,T),L^2(\R^n)\big):\\
& supp~u\subset \big\{(t, x)\in[0, T)\times\R^n:|x|\leq t+1\big\},\|u\|_{B_{T}}<\infty\},
\end{aligned}
\end{equation}
where $T$ is a positive constant and
\begin{equation}\label{phibtk}
\begin{aligned}
\|u\|_{B_{T}}:= \mathop{sup}\limits_{t\in[0,T)}E_u^{\frac{1}{2}}(t),~~~E_u(t):=\frac{1}{2}\int_{\R^n}(u_t^2+|\nabla u|^2)dx.
\end{aligned}
\end{equation}
It is easy to see that $(B_{T},\|\cdot\|_{B_T})$ is a Banach space.

Consider the following Cauchy problem for $v\in B_{T}$
\begin{equation}
\label{repdfv}
\left\{
\begin{aligned}
& u_{tt}-\Delta u+\frac{\mu}{t}u_t=t^\alpha|v|^p=:F_v(t, x), ~~~~~(0, T)\times\R^n,\\
& u(0, x)=u_0(x), \quad u_t(0, x)=0,~~~~x\in \R^n.
\end{aligned}
\right.
\end{equation}
For fixed $T>0$, noting the assumption \eqref{p}, by Sobolev embedding inequality it holds that
\begin{equation}
\begin{aligned}
F_v(t, x)\in L^2\big((0, T)\times\R^n\big).
\end{aligned}
\end{equation}
We are going to show that the map
\begin{equation}\label{mv}
\begin{aligned}
M:v\mapsto u=Mv,~v\in B_{T}
\end{aligned}
\end{equation}
is a contracting mapping. For $v\in B_{T}$, by Gagliardo-Nirenberg inequality and Poincar\'{e} type inequality, it holds
\begin{equation}\label{vl2ptheta}
\begin{aligned}
||v||_{L^{2p}(\R^n)}\leq C||v||_{L^2{(\R^n)}}^{1-\theta}||\nabla v||_{L^2{(\R^n)}}^{\theta},~~\theta:=n\left(\frac{1}{2}-\frac{1}{2p}\right)
\end{aligned}
\end{equation}
and
\[
||v||_{L^{2}(\R^n)}\leq C(1+t)||\nabla v||_{L^2{(\R^n)}},
\]
plugging which into \eqref{vl2ptheta}, we get
\begin{equation}\label{vl2p}
\begin{aligned}
||v||_{L^{2p}(\R^n)}\leq C(1+t)^{1-\theta}||\nabla v||_{L^2{(\R^n)}}\leq C(1+t)^{1-\theta}E_v^{\frac{1}{2}}.
\end{aligned}
\end{equation}

We first show the finite speed propagation property of the energy solution. Without loss of generality, we may assume that $u(t, x)\in C^1([0, T); C_0^\infty(\R^n))$. In the general case, we may approximate $u_0(x)\in H^1(\R^n)$ and $F_v(t, x)\in L^2\big([0, T)\times\R^n\big)$ by the sequence $\{u_0^k\}\subset C_0^\infty(\R^n)$ and $\{F_v^k\}\subset C^1\left([0, T); C_0^\infty(\R^n)\right)$, and denote by $u^k$ the corresponding solutions to \eqref{repdfv}. For $(t, x)\in (0, T]\times \R^n$ and $\tau\in [0, t)$, define the truncated cone
\begin{equation}\label{cx0t0}
\begin{aligned}
C_{\tau}:=\Big\{(s, y): |y-x|\le t-s, 0\le s\leq\tau\Big\}.
\end{aligned}
\end{equation}
Multiplying the equation in \eqref{repdfv} with $u_t$, we have
\begin{equation}\label{M1}
\begin{aligned}
\partial_t\left(\frac12u_t^2+\frac12|\nabla u|^2\right)-\nabla\cdot(\nabla u\nabla u_t)+\frac{\mu}{t}u_t^2=F_vu_t,
\end{aligned}
\end{equation}
integrating which over $C_\tau$ yields
\begin{equation}\label{M2}
\begin{aligned}
&\frac12\int_{|x-y|\le t-\tau}\left(u_\tau^2+|\nabla u|^2\right)(\tau, y)dy-\frac12\int_{|x-y|\le t}\left(u_t^2+|\nabla u|^2\right)(0, y)dy\\
&+\frac1{\sqrt 2}\int_0^\tau\int_{|y-x|=t-\tau}\left[\frac12\left(u_s^2+|\nabla u|^2\right)+\frac{y-x}{|y-x|}\cdot\nabla uu_s\right]d\sigma ds\\
&+\int_0^\tau\int_{|y-x|\le t-\tau}\frac{\mu}{s}u_s^2 dyds\\
=&\int_0^\tau\int_{|y-x|\le t-s}F_vu_s dy ds.\\
\end{aligned}
\end{equation}
It is easy to see the third surface integral term in the left hand side is nonnegative by using Cauchy-Schwarz inequality
\begin{equation}\label{M3}
\begin{aligned}
\left|\frac{y-x}{|y-x|}\cdot\nabla uu_t\right|\le \frac12\left(u_t^2+|\nabla u|^2\right).
\end{aligned}
\end{equation}
Denoting the local energy by
\[
E_l(\tau)=\frac12\int_{|x-y|\le t-\tau}\left(u_\tau^2+|\nabla u|^2\right)(\tau, y)dy,
\]
then it follows from \eqref{M2} that
\begin{equation}\label{M4}
\begin{aligned}
E_l(\tau)\le E_l(0)+C\max_{s\in [0, \tau]}\|F_v\|_{L^2(\R^n)}\int_0^\tau E_l(s)ds,
\end{aligned}
\end{equation}
which yields by Gronwall's inequality
\begin{equation}\label{M5}
\begin{aligned}
E_l(\tau)\le E_l(0)\exp\left(C\max_{s\in [0, \tau]}\|F_v\|_{L^2(\R^n)}\tau\right).
\end{aligned}
\end{equation}
Hence if $u(0, y)=0$ for $|y-x|\le t$, then $u(\tau, y)=0$ for $|y-x|\le t-\tau, \tau\in [0, t)$. This implies if $u(0, x)=0$ for $|x|>1$, then $u(t, x)=0$ for $|x|\ge t+1$.

Next, we show that $||Mv||_{B_{T}}<\infty$.
By \eqref{vl2p} it holds
\begin{equation}\label{vput}
\begin{aligned}
\int_{\R^n}|v|^p|u_t|dx&\leq\Bigg(\int_{\R^n}|v|^{2p}dx\Bigg)^{\frac{1}{2}}\sqrt{2}E_u^{\frac{1}{2}}(t)\\
&\leq C(t+1)^{p(1-\theta)}E_v^{\frac{p}{2}}(t)E_u^{\frac{1}{2}}(t).
\end{aligned}
\end{equation}
Integrating \eqref{M1} over $[0, t]\times\R^n, t\in (0, T]$ and using the divergence theorem, we obtain
\begin{equation}\label{divesti}
\begin{aligned}
&E_u(t)+\int_{0}^{t}\int_{\R^n}\frac{\mu}{s}u_s^2dxds\\
\leq& E_u(0)+C\int_{0}^ts^\alpha (s+1)^{p(1-\theta)}E_u^{\frac{1}{2}}(s)ds,
\end{aligned}
\end{equation}
which yields further by Bihari's inequality
\begin{equation}\label{divesti12}
\begin{aligned}
E_u^{\frac{1}{2}}(t)&\leq E_u^{\frac{1}{2}}(0)+C\int_{0}^ts^\alpha (s+1)^{p(1-\theta)}ds.\\
&\leq E_u^{\frac{1}{2}}(0)+C(1+T)^{p(1-\theta)}T^{\alpha+1},\\
\end{aligned}
\end{equation}
hence for a fixed $T$ we have $E_u^{\frac{1}{2}}(t)<\infty$.

Finally, we show the contraction of the map $M$. Let $v_1,v_2\in B_{T}$ and
\begin{equation}\label{u1u2}
\begin{aligned}
u_1=Mv_1,~~~u_2=Mv_2
\end{aligned}
\end{equation}
with the same initial data and
\[
\overline{u}=u_1-u_2,~~\overline{v}=v_1-v_2.
\]
Then $\overline{u}$ satisfies the following Cauchy problem
\begin{equation}
\label{repdoverline}
\left\{
\begin{aligned}
& \overline{u}_{tt}-\Delta \overline{u}+\frac{\mu}{t}\overline{u}_t=t^\alpha|v_1|^p-t^\alpha|v_2|^p, ~~~~in~ (0, T)\times\R^n,\\
& \overline{u}(0, x)=0, \quad \overline{u}_t(0, x)\equiv0,~~~~x\in \R^n,
\end{aligned}
\right.
\end{equation}
and
\begin{equation}\label{divoverline}
\begin{aligned}
\frac{\partial}{\partial t}\Bigg(\frac{1}{2}\big(\overline{u}_t^2+|\nabla \overline{u}|^2\big)\Bigg)+\frac{\mu}{t}\left(\overline{u}_t\right)^2=\nabla\cdot
(\overline{u}_t\nabla \overline{u})+t^\alpha(|v_1|^p-|v_2|^p)\overline{u}_t.
\end{aligned}
\end{equation}
By \eqref{vl2p} and H\"{o}lder inequality, it holds
\begin{equation}
\begin{aligned}
&\int_{\R^n}t^\alpha\big||v_1|^p-|v_2|^p\big||\overline{u}_t|dx\\
\leq &C\int_{\R^n}t^\alpha|v_1-v_2|(|v_1|+|v_2|)^{p-1}|\overline{u}_t|dx\\
\leq &Ct^\alpha\left\||\overline{v}|(|v_1|+|v_2|)^{p-1}\right\|_{L^2(\R^n)}
\|\overline{u}_t\|_{L^2(\R^n)}\\
\leq& Ct^\alpha\|\overline{v}\|_{L^{2p}(\R^n)}\Big(\|v_1\|_{L_{(\R^n)}^{2p}}^{p-1}+
\|v_2\|_{L^{2p}(\R^n)}^{p-1}\Big)\|\overline{u}_t\|_{L^2(\R^n)}\\
\leq &C t^\alpha(t+1)^{p(1-\theta)}E_{\overline{v}}^{\frac{1}{2}}E_{\overline{u}}^{\frac{1}{2}}.
\end{aligned}
\end{equation}
Then, integrating \eqref{divoverline} over $[0, T]\times\R^n$, exploiting again the Bihari's inequality and proceeding similarly as above, we obtain
\begin{equation}
\begin{aligned}
||\overline{u}||_{B_{T}}\leq C(1+T)^{p(1-\theta)}T^{1+\alpha}\|\overline{v}\|_{B_{T}},
\end{aligned}
\end{equation}
which implies $M$ is a contraction map by choosing $T$ small enough.
The proof of the local existence and finite speed propagation are completed.

\section{Proof of Theorem \ref{hddamped}}
\subsection{Preliminary}
We first introduce a smooth cut-off function
\[
\eta(t):=
 \left\{
 \begin{array}{cl}
 1 & \mbox{for}\ t\le\frac12,\\
 \mbox{decreasing} & \mbox{for}\ \frac12<t<1,\\
 0 & \mbox{for}\ t\ge 1,\\
 \end{array}
 \right.
\]
and the corresponding scaled one is defined by
\begin{equation}
\label{scf}
 \eta_t(s):=\eta\left(\frac st\right),
\end{equation}
with $t>1$.

Besides, the solution for the following second order ordinary differential equation \begin{equation}
\label{ODE}
h''(t)-\left(\frac{\mu}{t}h(t)\right)_t-h(t)=0
\end{equation}
will be used.
\begin{lem}\label{ODE1}[Lemma 2.1 in \cite{FLT}]
The second order ordinary differential equation \eqref{ODE} admits a solution $h(t)$ satisfying
\begin{equation}\label{beode}
\begin{aligned}
& \lim_{t\rightarrow 0}\left(-h'(t)+\mu\frac{h(t)}{t}\right)\thicksim 2^{\frac{\mu-1}{2}}\Gamma\left(\frac{\mu+1}{2}\right)\triangleq C_0>0,\\
&h(t)\thicksim t^{\frac{\mu}2}e^{-t}\quad\mbox{for}\ t>R,\\
&|h'(t)|\thicksim t^{\frac{\mu}2}e^{-t}\quad\mbox{for}\ t>R,\\
\end{aligned}
\end{equation}
where $\Gamma$ denotes the gamma function, $R$ denotes a positive constant large enough.
\end{lem}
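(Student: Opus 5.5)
Lemma \ref{ODE1} is quoted from \cite{FLT}. The plan is to write down an explicit solution of \eqref{ODE} in terms of the modified Bessel function and read off the three properties from its classical asymptotics.

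\textbf{Reduction to a Bessel equation.} Expanding \eqref{ODE} gives
\[
h''-\frac{\mu}{t}h'+\frac{\mu}{t^2}h-h=0.
\]
Seek $h(t)=t^{a}w(t)$. Choosing $a=\frac{\mu+1}{2}$ removes the mismatch in the first-order coefficient. A direct computation then shows that $w$ satisfies the modified Bessel equation
\[
t^2w''+tw'-(t^2+\nu^2)w=0,\qquad \nu=\frac{\mu-1}{2}.
\]
To check this, the $t^{-2}$ coefficient equals $a(a-1)-\mu a+\mu=a^2-(\mu+1)a+\mu$, which at $a=\frac{\mu+1}{2}$ is $-\nu^2$. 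We therefore take
\[
h(t)=t^{\frac{\mu+1}{2}}K_{\frac{\mu-1}{2}}(t),
\]
since $K_\nu$ is the solution that decays at infinity.

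\textbf{Behaviour at infinity.} We use the classical expansion $K_\nu(t)=\sqrt{\pi/(2t)}\,e^{-t}(1+O(1/t))$. This gives $h(t)\sim t^{\frac{\mu}{2}}e^{-t}$ for large $t$. Next, the recurrence $(t^\nu K_\nu)'=-t^\nu K_{\nu-1}$, or equivalently $K_\nu'=-K_{\nu-1}-\frac{\nu}{t}K_\nu$, gives
\[
h'=t^{\frac{\mu+1}{2}}\Big(\frac{\mu+1}{2t}K_\nu+K_\nu'\Big)=t^{\frac{\mu+1}{2}}\Big(\frac{1}{t}K_\nu-K_{\nu-1}\Big).
\]
Here we used $\frac{\mu+1}{2}-\nu=1$. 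Since $K_{\nu-1}(t)$ has the same leading asymptotics, $|h'(t)|\sim t^{\frac{\mu}{2}}e^{-t}$ for $t>R$ with $R$ large.

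\textbf{Behaviour at $0$.} This is the delicate step, since $h'$ and $\mu h/t$ may each blow up individually when $\mu>1$. The formula for $h'$ above gives
\[
-h'+\frac{\mu h}{t}=t^{\frac{\mu+1}{2}}\Big(\frac{\mu-1}{t}K_\nu+K_{\nu-1}\Big)=t^{\frac{\mu-1}{2}}(\mu-1)K_\nu+t^{\frac{\mu+1}{2}}K_{\nu-1}.
\]
We then split into cases using the small-argument behaviour.

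\emph{Case $\nu>0$ (i.e. $\mu>1$).} We have $K_\nu(t)\sim\frac12\Gamma(\nu)(t/2)^{-\nu}$, so $t^{\nu}K_\nu\to 2^{\nu-1}\Gamma(\nu)$. The first term therefore tends to $(\mu-1)2^{\nu-1}\Gamma(\nu)=2^{\nu}\nu\Gamma(\nu)=2^{\frac{\mu-1}{2}}\Gamma\big(\frac{\mu+1}{2}\big)$. The second term is $t^{\nu+1}K_{\nu-1}(t)=O(t^{\nu+1-|\nu-1|})$, up to a possible logarithm, so it tends to $0$.

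\emph{Case $\mu=1$.} The first term vanishes identically, and $tK_{-1}=tK_1\to1=C_0$.

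\emph{Case $0<\mu<1$, so $\nu\in(-\frac12,0)$.} Here $K_\nu=K_{|\nu|}$. The first term is $O(t^{\nu-|\nu|})=O(t^{2\nu})$, which blows up. This cannot be the intended reading, so in this range we instead use the representation $-h'+\mu h/t=t^{\frac{\mu+1}{2}}K_{\frac{\mu+1}{2}}(t)\cdot(\text{const})$. This follows from rewriting via $K_{\nu-1}+\frac{2\nu}{t}K_\nu=K_{\nu+1}$, which gives
\[
-h'+\frac{\mu h}{t}=t^{\frac{\mu+1}{2}}K_{\nu+1}(t)+\dots
\]
We will verify this identity carefully: with $2\nu=\mu-1$, the expression $\frac{\mu-1}{t}K_\nu+K_{\nu-1}$ is exactly $K_{\nu+1}$. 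So in all cases $-h'+\mu h/t=t^{\nu+1}K_{\nu+1}(t)$, and since $\nu+1>0$ this tends to $2^{\nu}\Gamma(\nu+1)=2^{\frac{\mu-1}{2}}\Gamma\big(\frac{\mu+1}{2}\big)=C_0>0$, uniformly in $\mu>0$.

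The main obstacle is exactly this cancellation at $t=0$, and the recurrence relation is what resolves it.
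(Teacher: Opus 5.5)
Your proof is correct and follows essentially the same route as the paper. Like the paper, you take $h(t)=t^{\frac{\mu+1}{2}}K_{\frac{\mu-1}{2}}(t)$, use the recurrence relations \eqref{rela} to get the exact identity $-h'+\mu h/t=t^{\frac{\mu+1}{2}}K_{\frac{\mu+1}{2}}(t)$ (equivalent to the formula \eqref{dh} for $h'$), and read off both the limit $2^{\frac{\mu-1}{2}}\Gamma\left(\frac{\mu+1}{2}\right)$ and the large-$t$ behaviour of $h$ and $h'$ from the standard asymptotics of $K_\nu$. Once you have that identity, the preliminary case split ($\mu>1$, $\mu=1$, $0<\mu<1$) is unnecessary, since it covers all $\mu>0$ at once, so you can drop the cases and the tentative wording and present the uniform argument directly.
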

Actually, we may set
\begin{equation}\label{Kmu}
h(t):=t^{\frac{\mu+1}{2}}K_{\frac{\mu-1}{2}}(t),
\end{equation}
where $K_{\frac{\mu-1}{2}}(t)$ denotes the modified Bessel functions satisfying
\begin{equation}
\label{bessel}
K''_{\frac{\mu-1}{2}}(t)+\frac1tK'_{\frac{\mu-1}{2}}(t)-\left(1+\frac{\left(
\frac{\mu-1}2\right)^2}{t^2}\right)K_{\frac{\mu-1}{2}}(t)=0
\end{equation}
and
\begin{equation}\label{Bes}
\begin{aligned}
&K_\nu(z)\thicksim \frac12\Gamma(\nu)\left(\frac z2\right)^{-\nu},~~~\nu>0, z\rightarrow 0,\\
&K_{\nu}(z)=K_{-\nu}(z),\\
&K_\nu(z)\thicksim \frac{1}{\sqrt{2\pi}}z^{-\frac12}e^{-z},~~~|\mbox{arg}(z)|<\frac{\pi}2,\forall \nu, z\rightarrow \infty\\
\end{aligned}
\end{equation}
and the recurrence relations
\begin{equation}\label{rela}
\begin{aligned}
&K_\nu'(z)=-\frac12\left[K_{\nu-1}(z)+K_{\nu+1}(z)\right],\\
&K_\nu(z)=-\frac{z}{2\nu}\left[K_{\nu-1}(z)-K_{\nu+1}(z)\right].\\
\end{aligned}
\end{equation}
One can find more details about the property of the modified Bessel functions in \cite{Abr}.

Let
\begin{equation}\label{def:phi}
	\phi(x):=\int_{\mathbb{S}^{n-1}} e^{x\cdot\omega} \text{d}\omega,~~~ n \ge 2,
\end{equation}
which satisfies
\begin{equation}\label{testpro}
\begin{aligned}
0<\phi(x)\le C(1+|x|)^{-\frac{n-1}{2}}e^{|x|}.
\end{aligned}
\end{equation}

Denote
\begin{equation}\label{C1}
\begin{aligned}
h_\lambda(t):=h(\lambda t),~~~\phi_\lambda (x):=\phi(\lambda x),
\end{aligned}
\end{equation}
and
let
\begin{equation}\label{bq}
\begin{aligned}
b_q(t,x)=\int_{0}^{1}h_\lambda(t)\phi_{\lambda}(x)\lambda^{q-1}d\lambda,
\end{aligned}
\end{equation}
with
\begin{equation}\label{q}
\begin{aligned}
q=\frac{n-\mu-1}{2}-\frac{1}{p}=n+\alpha-\frac{n+\mu-1}{2}p,
\end{aligned}
\end{equation}
since $p=p_S(n+\mu, \alpha)$.
\begin{lem}\label{lbq}
The function $b_q(t, x)$ defined in \eqref{bq} satisfies
\begin{equation}\label{eqforbq}
\partial_t^2b_q-\Delta b_q-\partial_t\left(\frac{\mu}{t}b_q\right)=0,~~~t>0, x\in\R^n,
\end{equation}
\begin{equation}\label{asymforbq}
\begin{aligned}
&b_q(t, x)\thicksim t^{-q},~~~t\ge 1,
\end{aligned}
\end{equation}
and
\begin{equation}\label{asymfortbq}
\begin{aligned}
&\partial_t b_q(t, x)\lesssim t^{\frac{\mu}{2}-\frac{n-1}{2}}(t+2-r)^{\frac{1}{p}-1},~~~t\ge 1.
\end{aligned}
\end{equation}
\end{lem}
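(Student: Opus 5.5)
The plan is to verify \eqref{eqforbq} by separation of variables, and to obtain \eqref{asymforbq} and \eqref{asymfortbq} by the substitution $s=\lambda t$, which turns $b_q$ into a one-dimensional integral that can be estimated using Lemma \ref{ODE1} and \eqref{testpro}. Throughout, the asymptotic statements are understood in the region $|x|=r\le t+1$, which contains the support of the solution by the finite speed of propagation shown in Section 2.

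\textbf{Step 1: the equation.} By the chain rule, $h_\lambda(t)=h(\lambda t)$ satisfies
\[
h_\lambda''-\left(\frac{\mu}{t}h_\lambda\right)'-\lambda^2h_\lambda=0 .
\]
Indeed, $\left(\frac{\mu}{t}h(\lambda t)\right)'=\lambda^2\left(\frac{\mu}{\sigma}h(\sigma)\right)'\big|_{\sigma=\lambda t}$, and then we use \eqref{ODE}. From \eqref{def:phi} we have $\Delta\phi=\phi$, hence $\Delta\phi_\lambda=\lambda^2\phi_\lambda$. Therefore each product $h_\lambda(t)\phi_\lambda(x)$ solves \eqref{eqforbq}, and it remains to differentiate under the integral sign in \eqref{bq}.

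To justify this, I would use \eqref{Kmu} and $\eqref{Bes}_1$ (together with $K_\nu=K_{-\nu}$), which give $h(\sigma)\sim\sigma^{\min\{1,\mu\}}$ as $\sigma\to0$. The derivatives of $h_\lambda$ carry extra factors of $\lambda$. Consequently, all integrands are dominated, locally uniformly in $(t,x)$, by $C\lambda^{q-1+\min\{1,\mu\}}$, and this is integrable near $\lambda=0$. The integrability requires $q+\min\{1,\mu\}>0$; I expect this to follow from the restrictions \eqref{al} and $\mu\le\mu^*(n,\alpha)$, and I would check it explicitly.

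\textbf{Step 2: the bound \eqref{asymforbq}.} Substituting $s=\lambda t$ gives
\[
b_q(t,x)=t^{-q}\int_0^t h(s)\,\phi\!\left(\frac{s x}{t}\right)s^{q-1}\,ds .
\]
\emph{Lower bound.} I restrict to $s\in[0,1]$. There $h>0$ and $\phi\ge c>0$, because $|sx/t|\le 2$. This gives $b_q\gtrsim t^{-q}$.

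\emph{Upper bound.} On $[0,1]$ the integrand is integrable, using $h(s)\lesssim s^{\min\{1,\mu\}}$. On $[1,t]$ I use $h(s)\lesssim s^{\mu/2}e^{-s}$ together with \eqref{testpro}. Since $r\le t+1$, we have $e^{-s}e^{sr/t}\le e^{-s(t-r)/t}e^{s/t}\le e\,e^{-s(t-r)/t}$. I then split into two cases.
\begin{itemize}
\item If $r\le t/2$, the factor $e^{-s/2}$ makes the integral bounded.
\item If $r\ge t/2$, then $(1+sr/t)^{-\frac{n-1}{2}}\lesssim s^{-\frac{n-1}{2}}$, and the integrand is bounded by $s^{\frac{\mu}{2}+q-1-\frac{n-1}{2}}=s^{-1-\frac1p}$ by \eqref{q}. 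This is integrable on $[1,\infty)$.
\end{itemize}
Hence $b_q\sim t^{-q}$.

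\textbf{Step 3: the bound \eqref{asymfortbq}.} This is the step I expect to be the delicate one, because the factor $(t+2-r)^{\frac1p-1}$ must be extracted sharply near the light cone. After the same substitution,
\[
\partial_tb_q=t^{-q-1}\int_0^t h'(s)\,\phi\!\left(\frac{sx}{t}\right)s^{q}\,ds .
\]
The part with $s\le1$ contributes $O(t^{-q-1})$. This is acceptable, since $t^{-q-1}\le t^{-q-\frac1p}(t+2-r)^{\frac1p-1}$ because $t+2-r\le t+2$.

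For $s\ge1$, with $r\ge t/2$, I use $|h'(s)|\lesssim s^{\mu/2}e^{-s}$ and the same exponential bookkeeping as in Step 2, now written as $e^{-s(t-r)/t}\le e^{2}e^{-s(t+2-r)/t}$. The integrand is then bounded by
\[
s^{\frac\mu2+q-\frac{n-1}{2}}e^{-s(t+2-r)/t}=s^{-\frac1p}e^{-sA},\qquad A:=\frac{t+2-r}{t}\in\left[\frac1t,\,2\right].
\]
Since $1-\frac1p>0$,
\[
\int_1^t s^{-\frac1p}e^{-sA}\,ds\lesssim \min\{t,A^{-1}\}^{1-\frac1p}\lesssim \left(\frac{t}{t+2-r}\right)^{1-\frac1p}.
\]
This yields
\[
\partial_tb_q\lesssim t^{-q-1}\,t^{1-\frac1p}(t+2-r)^{\frac1p-1}=t^{\frac\mu2-\frac{n-1}{2}}(t+2-r)^{\frac1p-1},
\]
using $-q-\frac1p=\frac{\mu}{2}-\frac{n-1}{2}$ from \eqref{q}.

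In the remaining region $r<t/2$, the factor $e^{-s/2}$ makes the whole integral $O(1)$. Then $\partial_t b_q\lesssim t^{-q-1}$, which is dominated by the right-hand side of \eqref{asymfortbq} by the same comparison as for the $s\le1$ part. This completes the plan.
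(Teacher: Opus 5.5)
Your argument is correct on the range where the lemma actually holds, and it takes a different route from the paper's.

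\textbf{Comparison of routes.} The paper verifies \eqref{eqforbq} by direct computation. It uses the recurrences \eqref{rela} to write $\partial_t b_q$, $\partial_t^2 b_q$ and $\Delta b_q$ as $\lambda$-integrals of $K_{\frac{\mu\pm1}{2}}$ and checks that they cancel, without justifying differentiation under the integral. You use the scaling identity $h_\lambda''-(\frac{\mu}{t}h_\lambda)'=\lambda^2h_\lambda$ together with $\Delta\phi_\lambda=\lambda^2\phi_\lambda$. This is shorter and you also give the domination argument.

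For the bounds, the paper stays in the $\lambda$ variable and splits according to $r\lessgtr\frac{t+1}{2}$. It writes $\partial_tb_q=\frac{\mu}{t}b_q+II$ and applies $h(s)\lesssim s^{\mu/2}e^{-s}$ and $K_{\frac{\mu+1}{2}}(s)\lesssim s^{-1/2}e^{-s}$ on the whole range $0<s<t$. You rescale $s=\lambda t$, split at $s=1$, and use the true small-argument behaviour $h(s)\approx s^{\min\{1,\mu\}}$ (with an extra factor $\log(1/s)$ when $\mu=1$).

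This difference is substantive. For $\mu>2$ one has $h(s)\sim cs\gg s^{\mu/2}$ near $0$, so the paper's bound fails there. Its integrability condition $\frac{n-1}{2}>\frac1p$, i.e. $q+\frac{\mu}{2}>0$, is therefore not the relevant one, and the extra restriction it derives for $n=2$ is an artifact of that bound. Your condition $q+\min\{1,\mu\}>0$ is the correct one and covers all cases uniformly.

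A minor point: in Step 2 the chain ending in $e\,e^{-s(t-r)/t}$ is vacuous. What you need, and what is true, is $e^{-s(t-r)/t}\le e^{s/t}\le e$ for $r\le t+1$ and $s\le t$.

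\textbf{The deferred check.} The item you left open can be settled, but not on the full range $\mu\le\mu^*$. Eliminating $\mu$ between the two expressions in \eqref{q} gives
\[
q=n-1-\frac{2+\alpha}{p-1}.
\]
The exponent $p_S(n+\mu,\alpha)$ is strictly decreasing in $\mu$, since $\partial_\mu\gamma=p(1-p)<0$. It equals $p_F(n,\alpha)=1+\frac{2+\alpha}{n}$ exactly at $\mu=\mu^*(n,\alpha)$. Hence $q>-1$ if and only if $\mu<\mu^*$, and $q=-1$ at $\mu=\mu^*$.

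Moreover $q+\mu=\frac{n+\mu-1}{2}-\frac1p>0$ always, because $\gamma(n,\mu,\alpha,\frac{2}{n+\mu-1})=4+\frac{4\alpha}{n+\mu-1}>0$. Since $\mu^*>1$, your condition holds precisely for $0<\mu<\mu^*$.

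At $\mu=\mu^*$ the failure is genuine, not technical. There $h(\lambda t)\sim c\lambda t$ as $\lambda\to0$, so the integrand in \eqref{bq} behaves like $c\,t\,\phi(0)\lambda^{-1}$ and $b_q\equiv+\infty$. You should therefore state the lemma, and your proof, for $\mu<\mu^*(n,\alpha)$. The endpoint permitted in Theorem \ref{hddamped} cannot be reached with this $b_q$, and the paper's proof overlooks this because of its global bound on $h$.
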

\begin{proof}
We may compute $h'(\lambda t)$ by using \eqref{rela}
\begin{equation}\label{dh}
\begin{aligned}
&h'(\lambda t)\\
=&\frac{\mu+1}{2}(\lambda t)^{\frac{\mu-1}{2}}\lambda K_{\frac{\mu-1}{2}}(\lambda t)
+(\lambda t)^{\frac{\mu+1}{2}}\lambda K'_{\frac{\mu-1}{2}}(\lambda t)\\
=&\frac{\mu+1}{2}(\lambda t)^{\frac{\mu-1}{2}}\lambda K_{\frac{\mu-1}{2}}(\lambda t)+(\lambda t)^{\frac{\mu+1}{2}}\lambda
\left(-\frac12K_{\frac{\mu-3}{2}}(\lambda t)-\frac12K_{\frac{\mu+1}{2}}(\lambda t)\right)\\
=&\frac{\mu+1}{2}(\lambda t)^{\frac{\mu-1}{2}}\lambda K_{\frac{\mu-1}{2}}(\lambda t)-\frac12(\lambda t)^{\frac{\mu+1}{2}}\lambda
\left(K_{\frac{\mu+1}{2}}(\lambda t)-\frac{2\cdot\frac{\mu-1}{2}}{\lambda t}
K_{\frac{\mu-1}{2}}(\lambda t)\right)\\
&-\frac12(\lambda t)^{\frac{\mu+1}{2}}\lambda K_{\frac{\mu+1}{2}}(\lambda t)\\
=&\mu (\lambda t)^{\frac{\mu-1}{2}}\lambda K_{\frac{\mu-1}{2}}(\lambda t)-(\lambda t)^{\frac{\mu+1}{2}}\lambda K_{\frac{\mu+1}{2}}(\lambda t).\\
\end{aligned}
\end{equation}
From the definition of $h_\lambda(t)$ and $b_q(t, x)$, direct computation yields
\begin{equation}\label{l41}
\begin{aligned}
\partial_tb_q(t,x)&=\int_{0}^{1}h_\lambda'(t)\phi_\lambda(x)\lambda^{q-1}d\lambda\\
&=\frac{\mu}{t}\int_{0}^{1}(\lambda t)^{\frac{\mu+1}{2}}K_{\frac{\mu-1}{2}}(\lambda t)\phi_\lambda(x)\lambda^{q-1}d\lambda\\
&-\int_{0}^{1}(\lambda t)^{\frac{\mu+1}{2}}K_{\frac{\mu+1}{2}}(\lambda t)\phi_\lambda(x)\lambda^{q}d\lambda.
\end{aligned}
\end{equation}
In a similar way, we have
\begin{equation}\label{l42}
\begin{aligned}
\partial_t^2b_q=&-\frac{\mu}{t^2}b_q+\frac{\mu}t\int_0^1\partial_th_\lambda(t)
\phi_\lambda(x)\lambda^{q-1}d\lambda\\
&-\frac{\mu+1}{2}\int_0^1(\lambda t)^{\frac{\mu-1}{2}}K_{\frac{\mu+1}{2}}(\lambda t)\phi_\lambda(x)\lambda^{q+1}d\lambda\\
&-\int_0^1(\lambda t)^{\frac{\mu+1}{2}}\partial_tK_{\frac{\mu+1}{2}}(\lambda t)\phi_\lambda(x)\lambda^{q}d\lambda\\
=&-\frac{\mu}{t^2}b_q+\frac{\mu^2}{t^2}b_q-\frac{\mu}t\int_0^1(\lambda t)^{\frac{\mu+1}{2}}K_{\frac{\mu+1}{2}}(\lambda t)\phi_\lambda(x)\lambda^{q}d\lambda\\
&+\int_0^1(\lambda t)^{\frac{\mu+1}{2}}K_{\frac{\mu-1}{2}}(\lambda t)\phi_\lambda(x)\lambda^{q+1}d\lambda.\\
\end{aligned}
\end{equation}
Noting that
\begin{equation}\label{l42a}
\begin{aligned}
\Delta b_q=\int_0^1(\lambda t)^{\frac{\mu+1}{2}}K_{\frac{\mu-1}{2}}(\lambda t)\phi_\lambda(x)\lambda^{q+1}d\lambda,
\end{aligned}
\end{equation}
then \eqref{eqforbq} follows by combining \eqref{l41}, \eqref{l42} and \eqref{l42a}.

We next show the asymptotic behavior \eqref{asymforbq}. For $t\ge 1$, by $\eqref{beode}_2$, one has
\begin{equation}\label{l43}
\begin{aligned}
b_q(t, x)\gtrsim& \int_{\frac{1}{2(t+1)}}^{\frac{1}{t+1}}(\lambda t)^{\frac{\mu}{2}}
e^{-\lambda t}\lambda^{q-1}d\lambda\\
\gtrsim& t^{\frac{\mu}2}\int_{\frac{1}{2(t+1)}}^{\frac{1}{t+1}}\lambda^{q-1+\frac{\mu}{2}}
e^{-\lambda (t+R)}d\lambda\\
\gtrsim& t^{-q}\int_{\frac12}^1s^{q+\frac {\mu}{2}-1}e^{-s}ds\\
\gtrsim& t^{-q}.\\
\end{aligned}
\end{equation}
In the opposite direction, we divide the proof into two parts. For $r\le \frac{t+1}{2}$, it is easy to get by $\eqref{beode}_2$ and \eqref{testpro}
\begin{equation}\label{l44}
\begin{aligned}
b_q(t, x)\lesssim& \int_0^1(\lambda t)^{\frac{\mu}2}e^{-\lambda t}(1+\lambda r)^{-\frac{n-1}2}e^{\lambda r}\lambda^{q-1}d\lambda\\
\lesssim&t^{\frac{\mu}2}\int_0^1e^{-\frac{\lambda(t+1)}{2}}
\lambda^{q+\frac{\mu}2-1}d\lambda\\
\lesssim&t^{-q}\int_0^{\infty}e^{-s}s^{q+\frac{\mu}{2}-1}ds\\
\lesssim&t^{-q},
\end{aligned}
\end{equation}
while for $\frac{t+1}{2}\le r\le t+1$, it holds that
\begin{equation}\label{bigr}
\begin{aligned}
b_q(t, x)\lesssim& \int_0^1(\lambda t)^{\frac{\mu}2}e^{-\lambda t}(1+\lambda r)^{-\frac{n-1}2}e^{\lambda r}\lambda^{q-1}d\lambda\\
\lesssim&\int_0^1(\lambda t)^{\frac \mu 2}\bigg(1+\lambda(t+1)\bigg)^{-\frac{n-1}{2}}\lambda^{q-1} d\lambda\\
\lesssim&t^{-q}\int_0^{\infty}(1+s)^{-\frac{n-1}{2}}s^{\frac \mu 2+q-1}ds\\
\lesssim&t^{-q}\left(\int_0^{1}(1+s)^{-\frac{n-1}{2}}s^{\frac{n-1}2-\frac 1p-1}ds+\int_1^{\infty}(1+s)^{-\frac 1p-1}ds\right),\\
\end{aligned}
\end{equation}
where we use the fact that
\[
q=\frac{n-\mu-1}{2}-\frac{1}{p}.
\]
Obviously, the last term in \eqref{bigr} in integrable, while for the second to the last term, we should require $\frac{n-1}{2}>\frac 1p$ to ensure the integrability.
Actually it is equivalent to $p>\frac 2{n-1}, n\ge 2$, which is also equivalent to
\begin{equation}\label{eq1}
\begin{aligned}
\gamma\left(n, \mu, \alpha, \frac 2{n-1}\right)=4(n-1)^2+2(n-3)\mu+4(n-1)\alpha
>0,
\end{aligned}
\end{equation}
which always holds for $n\ge 3, \alpha\ge 0, \mu>0$. If $n=2$, \eqref{eq1} becomes to
\begin{equation}\label{eq2}
\begin{aligned}
\mu<2+2\alpha,
\end{aligned}
\end{equation}
which also holds for $0<\mu\le \mu^*(2, \alpha)$ and $\alpha>0$. And hence under the assumption of Theorem \ref{hddamped}, the inequality \eqref{bigr} becomes
\begin{equation}\label{bigr6}
\begin{aligned}
b_q(t, x)\lesssim t^{-q}.
\end{aligned}
\end{equation}
Then \eqref{asymforbq} follows by combining \eqref{l43}, \eqref{l44} and \eqref{bigr6}.

We finally show the asymptotic behavior \eqref{asymfortbq}. We can write \eqref{l41} as
\begin{equation}\label{l45}
\begin{aligned}
\partial_tb_q(t,x)&=I+II,
\end{aligned}
\end{equation}
with
\[
\begin{aligned}
I&:=\frac{\mu}{t}\int_{0}^{1}(\lambda t)^{\frac{\mu+1}{2}}K_{\frac{\mu-1}{2}}(\lambda t)\phi_\lambda(x)\lambda^{q-1}d\lambda,\\
II&:=-\int_{0}^{1}(\lambda t)^{\frac{\mu+1}{2}}K_{\frac{\mu+1}{2}}(\lambda t)\phi_\lambda(x)\lambda^{q}d\lambda.
\end{aligned}
\]
By combining $\eqref{beode}_2, \eqref{beode}_3$, \eqref{testpro}, it is easy to estimate $I$ as
\begin{equation}
\begin{aligned}
|I|\leq \frac{\mu}{t}\int_{0}^{1}h_\lambda(t)\phi_\lambda(x)\lambda^{q-1}d\lambda\lesssim t^{-q-1}.
\end{aligned}
\end{equation}
For $II$, we split the proof into two parts: $0<r\leq\frac{t+1}{2}$ and $\frac{t+1}{2}\leq r \leq t+1$. For the former case, we have
\begin{equation}
\begin{aligned}
|II|&\lesssim \int_{0}^{1}(\lambda t)^{\frac{\mu+1}{2}}(\lambda t)^{-\frac{1}{2}}e^{-\lambda t}(1+\lambda r)^{-\frac{n-1}{2}}e^{\lambda r}\lambda^{q}d\lambda\\
&\lesssim \int_{0}^{1}(\lambda t)^{\frac{\mu}{2}}e^{-\lambda(t-r)}(1+\lambda r)^{-\frac{n-1}{2}}\lambda^{q}d\lambda\\
&\lesssim t^{\frac{\mu}{2}}\int_{0}^{1}e^{-\lambda(t-r)}(1+\lambda r)^{-\frac{n-1}{2}}\lambda^{q+\frac{\mu}{2}}d\lambda\\
&\lesssim t^{\frac{\mu}{2}}\int_{0}^{1}e^{-\frac{\lambda(t+1)}{2}}\lambda^{q+\frac{\mu}{2}}d\lambda\\
&\lesssim t^{-q-1}\int_{0}^{\infty}e^{-s}s^{\frac{n-1}{2}-\frac{1}{p}}ds\\
&\lesssim t^{-\frac{n-\mu-1}{2}+\frac{1}{p}-1},
\end{aligned}
\end{equation}
where
\[
q=\frac{n-\mu-1}{2}-\frac{1}{p}.
\]
If $\frac{t+1}{2}\leq r \leq t+1$, we get
\begin{equation}
\begin{aligned}
|II|&\lesssim t^{\frac{\mu}{2}}\int_{0}^{1}e^{-\lambda(t+2-r)}\big(\lambda (t+1)\big)^{-\frac{n-1}{2}}\lambda^{q+\frac{\mu}{2}}d\lambda\\
&\lesssim t^{\frac{\mu}{2}}\int_{0}^{t+2-r}e^{-s}(t+1)^{-\frac{n-1}{2}}s^{-\frac{n-1}{2}}(t+2-r)^{\frac{n-1}{2}}s^{q+\frac{\mu}{2}}\\
&\quad\cdot(t+2-r)^{-q-\frac{\mu}{2}}(t+2-r)^{-1}ds\\
&\lesssim t^{\frac{\mu}{2}-\frac{n-1}{2}}(t+2-r)^{\frac{n-1}{2}-q-\frac{\mu}{2}-1}\int_{0}^{\infty}e^{-s}s^{-\frac{n-1}{2}+q+\frac{\mu}{2}}ds\\
&\lesssim t^{\frac{\mu}{2}-\frac{n-1}{2}}(t+2-r)^{\frac{1}{p}-1}\int_{0}^{\infty}e^{-s}s^{-\frac{1}{p}}ds\\
&\lesssim t^{\frac{\mu}{2}-\frac{n-1}{2}}(t+2-r)^{\frac{1}{p}-1}\bigg(\int_{0}^{1}e^{-s}s^{-\frac{1}{p}}ds+\int_{1}^{\infty}e^{-s}s^{-\frac{1}{p}}ds\bigg)\\
&\lesssim t^{\frac{\mu}{2}-\frac{n-1}{2}}(t+2-r)^{\frac{1}{p}-1}.
\end{aligned}
\end{equation}
Thus, we obtain
\begin{equation}
\begin{aligned}
|\partial_tb_q|\lesssim |I|+|II|\lesssim t^{\frac{\mu}{2}-\frac{n-1}{2}}(t+2-r)^{\frac{1}{p}-1},
\end{aligned}
\end{equation}
which leads to the asymptotic behavior \eqref{asymfortbq}.
\end{proof}

In the following we will establish the lifespan estimate stated in Theorem \ref{hddamped}. With $b_q$ in hand, we consider the functional
\begin{equation}\label{C10}
\begin{aligned}
Y(M)=\int_{1}^{M}\bigg(\int_{\frac{t}{2}}^{t}\int_{\R^n}\tau ^{\alpha}|u|^p(\tau,x)\eta_t^{2p'}(\tau)b_q(\tau,x)dxd\tau \bigg)t^{-1}dt,
\end{aligned}
\end{equation}
where $1\le t\le M$. Next we will establish both the lower and upper bound for $Y(M)$, following the idea in \cite{Ike2, Ike3}. Recalling the proof of (38) in \cite{FLT}, we have the lower bound for the spacetime integral of the nonlinear term
\begin{equation}\label{S9}
\begin{aligned}
\int_{\frac t2}^t\int_{\R^n}|u|^ps^{\alpha}\eta_t^{2p'}(s)dxds \geq C_*C_1^p \e^pt^{n+\alpha-\frac{n-1+\mu}{2}p},
\\
\end{aligned}
\end{equation}
then by combining \eqref{asymforbq} we see that
\begin{equation}\label{C11}
\begin{aligned}
Y'(M)&:= M^{-1}\int^{M}_{\frac{M}{2}}\int_{\R^n}\tau ^{\alpha}|u|^p(\tau,x)\eta_M^{2p'}(\tau)b_q(\tau,x)dxd\tau\\
&\geq C_*C_1^p\e^p  M^{-1}M^{n+\alpha-\frac{n-1+\mu}{2}p-\frac{n-\mu-1}{2}+\frac{1}{p}} \\
&\geq C  \e^pM^{-1}M^{\frac{2+(n+\mu+2\alpha+1)p-(n+\mu-1) p^2}{2p}}\\
&\geq C \e^pM^{-1},
\end{aligned}
\end{equation}
where we used the fact
\begin{equation}\label{C12}
\begin{aligned}
\frac{n-\mu-1}{2}-\frac{1}{p}=n+\alpha-\frac{n+\mu-1}{2}p
\end{aligned}
\end{equation}
for $p=p_S(n+\mu,\alpha)$. Integrating \eqref{C11} from $1$ to $M$ yields
\begin{equation}\label{C13}
\begin{aligned}
Y(M)&\geq C \e^p \ln M,
\end{aligned}
\end{equation}
which means
\begin{equation}\label{C14}
\begin{aligned}
Y(t)\geq C \e^p \ln t,~~~ t\geq 1.
\end{aligned}
\end{equation}

On the other hand, if we exchange the integrating order of $\tau$ and $t$ in \eqref{C10}, and note that $t\le 2\tau$ from the definition of $Y(M)$, we get

\begin{equation}\label{C15}
\begin{aligned}
Y(M)&\leq\int_{\frac{t}{2}}^{M}\int_{\R^n}\tau ^{\alpha}|u|^p(\tau,x)b_q(\tau,x)\int_{1}^{{\min(M, 2\tau)}}\eta_t^{2p'}(\tau)t^{-1}dtdxd\tau  \\
&=\int_{\frac{t}{2}}^{M}\int_{\R^n}\tau^{\alpha}|u|^p(\tau,x)b_q(\tau,x)
\int_{{\max(\frac{\tau}M, \frac12)}}^{\tau}\eta^{2p'}(s)s^{-1}dsdxd\tau  \\
&\leq\int_{\frac{t}{2}}^{M}\int_{\R^n}\tau^{\alpha}|u|^p(\tau,x)b_q(\tau,x)
\int_{{\max(\frac{\tau}M, \frac12)}}^{1}\eta^{2p'}(s)s^{-1}dsdxd\tau  \\
&\leq\int_{\frac{t}{2}}^{M}\int_{\R^n}\tau^{\alpha}|u|^p(\tau,x)b_q(\tau,x)\eta^{2p'}(\frac{\tau}{M})\int_{\frac{\tau}{M}}^{1}s^{-1}dsdxd\tau  \\
&\leq\int_{\frac{t}{2}}^{M}\int_{\R^n}\tau^{\alpha}|u|^p(\tau,x)b_q(\tau,x)
\eta^{2p'}_M(\tau)\int_{{\frac12}}^{1}s^{-1}dsdxd\tau  \\
&\leq \log2\int_{\frac{t}{2}}^{M}\int_{\R^n}\tau^{\alpha}|u|^p
(\tau,x)b_q(\tau,x)\eta^{2p'}_M(\tau)dxd\tau,  \\
\end{aligned}
\end{equation}
where we used the fact that $\eta$ is nonincreasing. The following picture is helpful to understand the exchange of the integrating order in the above inequality
\begin{center}
\begin{tikzpicture}
\draw[-] (0,0.2) node[left] {$0$};
\draw[-] (0,1.2) node[left] {$1$};
\draw[-] (0,2.7) node[left] {$M$};
\draw[very thin] (2,2) node[above] {};
\draw(0,0) -- (3,3)node[above] {$t=2\tau$};
\draw(0,0) -- (6,3)node[above] {$t=\tau$};
\draw[->](-2,0)--(7,0)node[right]{$\tau$};
\draw[->](0,-2)--(0,4)node[above]{$t$};
\draw(-1,1) -- (6,1)node[left]{};
\draw(-1,2.5) -- (6,2.5)node[left] {};
\draw[dashed] [red](1,-1)--(1,4) node[above] {$$};
\draw[-] (1,-1) node[below] {$\frac{1}{2}$};
\draw[dashed][red] (2.5,-1)--(2.5,4) node[above] {$$};
\draw[-] (2.5,-1) node[below] {$\frac{M}{2}$};
\draw[dashed] [red](5,-1)--(5,4) node[above] {$$};
\draw[-] (5,-1) node[below] {$M$};
\draw[pattern color=black!,pattern=north west lines](1,1)--(2.5,2.5)--(5,2.5)--(2,1)--(1/2,1);
\draw[pattern color=red!,pattern=north east lines](1,1)--(2.5,2.5)--(5,2.5)--(5,1)--(1/2,1);
\end{tikzpicture}
\end{center}
By \eqref{dh}, one has
\begin{equation}\label{dh1}
\begin{aligned}
&\lim_{t\rightarrow 0^+}\left(-h'(\lambda t)+\mu\frac{h(\lambda t)}{\lambda t}\right)\\
=&\lim_{t\rightarrow 0^+}(1-\lambda)\mu(\lambda t)^{\frac{\mu-1}{2}}K_{\frac{\mu-1}{2}}(\lambda t)+(\lambda t)^{\frac{\mu+1}{2}}\lambda K_{\frac{\mu+1}{2}}(\lambda t)\\
\thicksim&\lim_{t\rightarrow 0^+}
(1-\lambda)\mu(\lambda t)^{\frac{\mu-1}{2}}\frac12\Gamma\left(\frac{\mu-1}{2}\right)
\left(\frac {\lambda t}{2}\right)^{-\frac{\mu-1}{2}}\\
&+(\lambda t)^{\frac{\mu+1}{2}}\lambda\frac12\Gamma\left(\frac{\mu+1}{2}\right)
\left(\frac {\lambda t}{2}\right)^{-\frac{\mu+1}{2}}\\
\thicksim & 2^{\frac{\mu-3}{2}}(1-\lambda)\Gamma\left(\frac{\mu-1}{2}\right)
+\lambda 2^{\frac{\mu-1}{2}}\Gamma\left(\frac{\mu+1}{2}\right)\triangleq D_0>0,\\
\end{aligned}
\end{equation}
then we have further for $|x|\le 1$
\begin{equation}\label{dh2}
\begin{aligned}
&\lim_{t\rightarrow 0^+}\left(-\partial_tb_q(t,x)+\mu\frac{b_q(t,x)}{t}\right)\\
=&\lim_{t\rightarrow 0^+}\left(-\int_{0}^{1}h'(\lambda t)\phi_\lambda(x)\lambda^{q}d\lambda+\int_{0}^{1}\frac{\mu}{\lambda t}h(\lambda t)\phi_\lambda(x)\lambda^{q}d\lambda\right)\\
=&\lim_{t\rightarrow 0^+}\int_{0}^{1}\left(-h'(\lambda t)+\mu\frac{h(\lambda  t)}{\lambda t}\right)\phi_\lambda(x)\lambda^{q}d\lambda\\
\thicksim &  D_0\int_{0}^{1}\phi_\lambda(x)\lambda^{q}d\lambda\\
\ge &0,
\end{aligned}
\end{equation}
where the recurrence relations \eqref{rela} are used. It follows from \eqref{C15} that
\begin{equation}\label{C5}
\begin{aligned}
Y(t)&\leq \log2\int_{\frac{t}{2}}^{t}\int_{\R^n}\tau^{\alpha}|u|^p(\tau,x)b_q(\tau,x)\eta^{2p'}_t(\tau)dxd\tau\\
&\lesssim \int_{0}^{t}\int_{\R^n}\tau^{\alpha}|u|^p(\tau,x)b_q(\tau,x)\eta^{2p'}_t(\tau)dxd\tau\\
&=\int_0^t\int_{\R^n}\bigg(\partial_\tau^2u-\Delta u +\frac{\mu u_\tau}{\tau}\bigg)b_q(\tau,x)\eta^{2p'}_t(\tau)dxd\tau.
\end{aligned}
\end{equation}
Integrating by parts we have
\begin{equation}\label{C17}
\begin{aligned}
&\int_{\R^n}\lim_{\tau\rightarrow 0^+}\left(-\partial_\tau b_q(\tau,x)+\mu\frac{b_q(\tau,x)}{\tau}\right)u(0,x)dx\\
&+\int_0^t\int_{\R^n}\bigg(\partial_\tau^2u-\Delta u +\frac{\mu u_\tau}{\tau}\bigg)b_q(\tau,x)\eta^{2p'}_M(\tau)dxd\tau\\
=&\int_0^t\int_{\R^n}u\partial_\tau^2\eta_t^{2p'}(\tau)b_qdxd\tau+
2\int_0^t\int_{\R^n}u\partial_\tau\eta_t^{2p'}(\tau)\partial_\tau b_qdxd\tau\\
&-\int_0^t\int_{\R^n}\frac{\mu}{\tau}u\partial_\tau\eta_t^{2p'}(\tau)b_qdxd\tau\\
=:& II_1+II_2+II_3,\\
\end{aligned}
\end{equation}
where by \eqref{dh2} and the assumption of the initial data $u_0(x)$
\begin{equation}
\begin{aligned}\label{C00}
&\int_{\R^n}\lim_{\tau\rightarrow 0^+}\left(-\partial_\tau b_q(\tau,x)+\mu\frac{b_q(\tau,x)}{\tau}\right)u(0,x)dx\\
\thicksim &\int_{|x|\le 1}u_0(x)D_0\int_{0}^{1}\phi_\lambda(x)\lambda^{q}d\lambda dx\\
 \ge& 0.
\end{aligned}
\end{equation}
By using H\"{o}lder inequality, and combining \eqref{asymforbq} and \eqref{asymfortbq}, we may estimate $II_1, II_2, II_3$ as

\begin{equation}\label{C6}
\begin{aligned}
|II_1|\lesssim&t^{-2-\frac{\alpha}{p}}\left(\int_{\frac t2}^t\int_{\R^n}|u|^p\tau^{\alpha}\eta_t^{2p'}b_q
dxd\tau\right)^{\frac1p}\left(\int_{\frac t2}^{t}\int_{\R^n}b_qdxd\tau\right)^{\frac{p-1}p}\\
\lesssim&t^{-2-\frac{\alpha}{p}}\left(\int_{\frac t2}^t\int_{\R^n}|u|^p\tau^{\alpha}\eta_t^{2p'}b_q
dxd\tau\right)^{\frac1p}\left(\int_{\frac t2}^{t}\int_{|x|\le \tau+1}\tau^{-q}(1+r)^{n-1}drd\tau\right)^{\frac{p-1}p}\\
\lesssim&t^{-2-\frac{\alpha}{p}-\frac{q(p-1)}{p}}
\left(\int_{\frac t2}^t\int_{\R^n}|u|^p\tau^{\alpha}\eta_t^{2p'}b_q
dxd\tau\right)^{\frac1p}\left(\int_{\frac t2}^{t}\int_{|x|\le \tau+1}(1+r)^{n-1}drd\tau\right)^{\frac{p-1}p}\\
\lesssim&t^{-2-\frac{\alpha}{p}-n-\alpha+\frac{n+\mu-1}{2}p+\frac{n+\alpha}{p}
-\frac{n+\mu-1}{2}
+\frac{(n+1)(p-1)}{p}}\times\left(\int_{\frac t2}^t\int_{\R^n}|u|^p\tau^{\alpha}\eta_t^{2p'}b_q
dxd\tau\right)^{\frac1p}\\
\lesssim&t^{-\frac{(n+\mu+1+2\alpha)p-(n+\mu-1)p^2+2}{2p}}\left(\int_{\frac t2}^t\int_{\R^n}|u|^p\tau^{\alpha}\eta_t^{2p'}b_q
dxd\tau\right)^{\frac1p}\\
\lesssim&\left(\int_{\frac t2}^t\int_{\R^n}|u|^p\tau^{\alpha}\eta_t^{2p'}b_q
dxd\tau\right)^{\frac1p}.\\
\end{aligned}
\end{equation}
In the same way, we have
\begin{equation}\label{C7}
\begin{aligned}
|II_2|\lesssim&\left(\int_{\frac t2}^t\int_{\R^n}|u|^p\tau^{\alpha}\eta_t^{2p'}b_q
dxd\tau\right)^{\frac1p}\times\left(\int_{\frac t2}^{t}\int_{\R^n}\tau^{-\frac{\alpha}{p}\frac{p}{p-1}}b_q^{-\frac{p'}{p}}(\partial_\tau b_q)^{\frac{p}{p-1}}\tau^{-\frac{p}{p-1}}dxd\tau\right)^{\frac{p-1}p}\\
\lesssim&\left(\int_{\frac t2}^t\int_{\R^n}|u|^p\tau^{\alpha}\eta_t^{2p'}b_q
dxd\tau\right)^{\frac1p}\Bigg(\int_{\frac t2}^{t}\int_{|x|\le \tau+1}\tau^{-\frac{\alpha}{p}\frac{p}{p-1}}\left(\tau^{-q}\right)^{-\frac 1{p-1}}\big(\tau^{\frac{\mu}{2}-\frac{n-1}{2}}\big)^\frac{p}{p-1}\\
&\quad\times\big((\tau+2-r)^{\frac{1}{p}-1}\big)^\frac{p}{p-1}
(1+r)^{n-1}\tau^{-\frac{p}{p-1}}drd\tau\Bigg)^{\frac{p-1}p}\\
\lesssim&\left(\int_{\frac t2}^t\int_{\R^n}|u|^p\tau^{\alpha}\eta_t^{2p'}b_q
dxd\tau\right)^{\frac1p}\left(\int_{\frac t2}^{t}\int_{|x|\le \tau+1}\tau^{-1}(\tau+2-r)^{-1}drd\tau\right)^{\frac{p-1}p}\\
\lesssim&(\ln t)^{\frac{1}{p'}}\left(\int_{\frac t2}^t\int_{\R^n}|u|^p\tau^{\alpha}\eta_t^{2p'}b_q
dxd\tau\right)^{\frac1p},\\
\end{aligned}
\end{equation}
and
\begin{equation}\label{C8}
\begin{aligned}
|II_3|\lesssim&t^{-1}\int_{\frac{t}{2}}^t\int_{\R^n}u\partial_\tau\eta_t^{2p'}(s)b_qdxd\tau\\
\lesssim&t^{-2-\frac{\alpha}{p}}\left(\int_{\frac t2}^t\int_{\R^n}|u|^p\tau^{\alpha}\eta_t^{2p'}b_q
dxd\tau\right)^{\frac1p}\times\left(\int_{\frac t2}^{t}\int_{\R^n}b_qdxd\tau\right)^{\frac{p-1}p}\\
\lesssim&\left(\int_{\frac t2}^t\int_{\R^n}|u|^p\tau^{\alpha}\eta_t^{2p'}b_q
dxd\tau\right)^{\frac1p}.
\end{aligned}
\end{equation}
In conclusion, by combining \eqref{C5}, \eqref{C17}, \eqref{C00}, \eqref{C6}, \eqref{C7}, \eqref{C8} we have
\begin{equation}\label{C27}
\begin{aligned}
Y(t)&\leq II_1+II_2+II_3\\
&\lesssim(\ln t)^{\frac{1}{p'}}\left(\int_{\frac t2}^t\int_{\R^n}|u|^p\tau^{\alpha}\eta_t^{2p'}b_q
dxd\tau\right)^{\frac1p}\\
&+\left(\int_{\frac t2}^t\int_{\R^n}|u|^p\tau^{\alpha}\eta_t^{2p'}b_q
dxd\tau\right)^{\frac1p}\\
&\lesssim(\ln t)^{\frac{1}{p'}}Z(t)^{\frac{1}{p}},~~~for~t\ge 2,
\end{aligned}
\end{equation}
where
\begin{equation}\label{C28}
\begin{aligned}
Z(t):=\int_{\frac t2}^t\int_{\R^n}|u|^p\tau^{\alpha}\eta_t^{2p'}b_q
dxd\tau.
\end{aligned}
\end{equation}
Noting that $Z=tY'(t)$, then it holds
\begin{equation}\label{C29}
\begin{aligned}
tY'(t)\geq (\ln t)^{1-p}Y^p.
\end{aligned}
\end{equation}
In addition, by combining  \eqref{S9} and \eqref{asymforbq}, we have
\begin{equation}\label{C30}
\begin{aligned}
&tY'(t)=Z(t)=\int_{\frac t2}^t\int_{\R^n}|u|^p\tau^{\alpha}\eta_t^{2p'}b_qdxd\tau\\
\gtrsim& \e^pt^{n+\alpha-\frac{n-1+\mu}{2}p-q}\\
=&\e^p.
\end{aligned}
\end{equation}
Integrating \eqref{C30} from 2 to $t>4$ yields
\begin{equation}\label{C31}
\begin{aligned}
Y(t)\geq Y(2)+c\e^p(\ln t-\ln 2)\gtrsim\e^p\ln t, \quad\forall t\in(4,T_\e).
\end{aligned}
\end{equation}
Similarly, integrating \eqref{C29} from $T_1$ to $T_2$ yields
\begin{equation}\label{C32}
\begin{aligned}
Y(T_2)^{1-p}&\leq Y(T_1)^{1-p}-C(p-1)\int_{T_1}^{T_2}\tau^{-1}(\ln \tau)^{1-p}d\tau\\
&\leq (\e^p\ln T_1)^{1-p}-C(p-1)\int_{T_1}^{T_2}(\ln\tau)^{1-p}d\ln\tau\\
&\leq  (\e^p\ln T_1)^{1-p}-C(p-1)\int_{\ln T_1}^{\ln T_2}s^{1-p}ds,\quad \forall 4<T_1<T_2<T_\e.
\end{aligned}
\end{equation}
Let $T_2\rightarrow T_\e$, and noting that $Y(T)\ge 0$, we get from \eqref{C32} that
\begin{equation}\label{C33}
\begin{aligned}
\int_{\ln T_1}^{\ln T_\e}s^{1-p}ds\lesssim (\e^p\ln T_1)^{1-p}.
\end{aligned}
\end{equation}
Set $T_1=\sqrt{T_\e}$ in \eqref{C33}, it is easy to get
\begin{equation}\label{C34}
\begin{aligned}
\ln T_\e\lesssim \e^{-p(p-1)},
\end{aligned}
\end{equation}
which leads to the desired lifespan estimate \eqref{lf2} in Theorem \ref{hddamped}.
\section*{Acknowledgment}
\par
A part of this work was completed when the first author, Mengting Fan, visited Tohoku University.
She would like to express the sincere thank to the third author, Hiroyuki Takamura,
for the warm hospitality and helpful discussion.
\par
The first author is supported by the International Office and School of Mathematical Sciences of Zhejiang Normal University. The second author is partially supported by NSFC (No.12271487, W2521007).
The third author is partially supported by the Grant-in-Aid for Scientific Research(A) (No.22H00097), Japan Society for the Promotion of Science.


\bibliographystyle{plain}

\end{document}